\newtheorem{theorem}{Theorem}
\newtheorem{definition}{Definition}
\newtheorem{lemma}{Lemma}
\newtheorem{proposition}{Proposition}
\newtheorem{corollary}{Corollary}
\newtheorem{remark}{Remark}
\numberwithin{equation}{section}
\numberwithin{theorem}{section}
\numberwithin{definition}{section}
\numberwithin{lemma}{section}
\numberwithin{proposition}{section}
\numberwithin{corollary}{section}
\numberwithin{remark}{section} 
\def \Div {\mathrm{div}}
\def \N {\mathbb{N}}
\def \R {\mathbb{R}}
\def \supp {\mathrm{supp } }
\def \diam {\mathrm{diam}}
\def \dist {\mathrm{dist}}
\def \loc {\mathrm{loc}}
\title[Degenerate quenching problems]{Geometric properties of free boundaries in degenerate quenching problems}
\author[D. J. Ara\'ujo]{Dami\~ao J. Ara\'ujo}
\address{Department of Mathematics, Universidade Federal da Para\'iba, 58059-900, Jo\~ao Pessoa-PB, Brazil}{}
\email{araujo@mat.ufpb.br}
\author[R. Teymurazyan]{Rafayel Teymurazyan}
\address{Applied Mathematics and Computational Sciences (AMCS), Compu\-ter, Electrical and Mathematical Sciences and Engineering Division (CEMSE), King Abdullah University of Science and Technology (KAUST), Thuwal, 23955 -6900, Kingdom of Saudi Arabia}{}
\email{rafayel.teymurazyan@kaust.edu.sa} 
\author[J.M.~Urbano]{Jos\'{e} Miguel Urbano}
\address{Applied Mathematics and Computational Sciences (AMCS), Compu\-ter, Electrical and Mathematical Sciences and Engineering Division (CEMSE), King Abdullah University of Science and Technology (KAUST), Thuwal, 23955 -6900, Kingdom of Saudi Arabia and CMUC, Department of Mathematics, University of Coimbra, 3000-143 Coimbra, Portugal}{} 
\email{miguel.urbano@kaust.edu.sa}
\begin{document}

\subjclass[2020]{Primary 35R35. Secondary 35A21, 35J70}




\keywords{Quenching problem; Free boundary; Hausdorff estimates; Degenerate PDEs}

\begin{abstract}

We study minimizers of non-differentiable functionals modeled on the degenerate quenching problem. Our main result establishes the finiteness of the $(n-1)-$dimensional Hausdorff measure of the free boundary. The proof is based on optimal gradient decay estimates obtained from an intrinsic Harnack-type inequality, along with a detailed analysis in a flatness regime, where minimizers enjoy improved regularity. Our arguments provide an alternative proof of classical results of Phillips and, although developed in the degenerate setting, also offer insights relevant to the singular case.

\end{abstract}

\date{\today}

\maketitle

\section{Introduction}\label{s1}

We investigate in this paper minimizers of $p-$energy functionals of the form
$$
v \longmapsto \int_{\Omega} \frac{|D v|^p}{p}+F(x,v) \,dx,
$$
where the potential $F(x,s) \geq 0$ is non-differentiable. 

This class of problems is well-understood in the case $p=2$, particularly in the context of the obstacle problem, for which $F(x,s)=s_+$ (see \cite{C80, CK80} and the book \cite{PSU12}), or the cavity or Bernoulli free boundary problem, featuring the emblematic discontinuous potential $F(x,s)=\chi_{\{s>0\}}$ and investigated by Alt and Caffarelli in \cite{AC81}. An intermediate scenario emerges by interpolating between the two previous cases and considering potentials exhibiting $\gamma-$growth, namely with
\begin{equation} \label{quench}
F(x,s)= s_+^\gamma, \qquad \gamma \in (0,1).
\end{equation}
Often known as Alt-Phillips potentials (\textit{cf.} \cite{AP86, P1, P2}), they give rise to the so-called quenching problem, which models phenomena characterized by abrupt changes in certain quantities across unknown interfaces. 

In all these examples, the lack of differentiability of $F$ significantly reduces the efficiency of the regularization mechanisms for minimizers relative to the classical scenarios. Still in the case $p=2$, it has been shown in \cite{P1} that the optimal regularity class for minimizers is $C_{\rm loc}^{1,\alpha}$, with 
$$
\alpha=\frac{\gamma}{2-\gamma}, \qquad \gamma \in (0,1).    
$$
For the obstacle and cavity problems, corresponding to $\gamma=1$ and $\gamma=0$, minimizers are, respectively, locally of class $C^{1,1}$ and Lipschitz. Concerning the free boundary, in the seminal works \cite{C81} for the obstacle problem, and \cite{P2} for the case $\gamma \in (0,1)$, analytical and geometric measure theory methods were applied to establish that interfaces have finite $(n-1)-$dimensional Hausdorff measure. For the two-phase problem $(p=2)$, the free boundaries are known to be $C^1$ in dimension two (see \cite{LP08}).

For the nonlinear case $p\neq 2$, much less is known, as the non-quadratic growth in the functional complicates the understanding of the regularity and of the geometric properties of minimizers and the free boundary. Obstacle problems, corresponding to $\gamma=1$, were treated in \cite{ALS15, FKR17} for non-zero obstacles $\varphi$. In particular, in \cite{ALS15}, the authors show that a minimizer $u$ is of class $C^{1,p'-1}$ at the free boundary $\partial\{u>\varphi\}$, for $p^{\prime}=\frac{p}{p-1}$. Regularity estimates for the intermediate scenario $\gamma \in (0,1)$ were established in \cite{LQT15} for the sign-changing case. In \cite{ATV22}, potentials with varying degrees of non-differentiability
$$
F(x,s) \sim (s-\varphi(x))_+^\gamma,
$$
were considered, and improved regularity estimates at contact points in $\partial\{u>\varphi\}$ were obtained. For the particular case of an obstacle $\varphi\equiv 0$, it was shown therein that the minimizers are of class $C^{1,\alpha-1}$ at the quenching interface $\partial\{u>0\}$, for
\begin{equation}\label{definition of alpha}
    \alpha=\frac{p}{p-\gamma},
\end{equation}
revealing the precise interplay between the singularity parameter $\gamma$ and the exponent $p$ in determining the regularity of minimizers. Related problems were studied in \cite{LO08, DP05}, where the Lipschitz regularity of minimizers was established using a singular perturbation technique. 

These efforts notwithstanding, the study of the free boundary for problems involving the $p-$Laplace operator remains virtually virgin ground, the only significant contributions being the results in \cite{KKPS00, LS03, FKR17, CLR12, CLRT14} for the $p-$obstacle problem, and in \cite{DPS03} for the cavity problem, where it was shown that near flat points, the free boundary is of class $C^{1,\beta}$, for a certain $\beta\in(0,1)$. 

In this paper, we advance the theory one step forward, extending the analysis to the degenerate case $p>2$ in the quenching scenario \eqref{quench}, for which the corresponding Euler-Lagrange equation is singular since the right-hand side blows up at the free boundary
$\partial\{u>0\}$. We consider non-negative minimizers and chiefly establish the finiteness of the $(n-1)-$dimensional Hausdorff measure of the free boundary, a result that plays a significant role in the context of universally characterizing the ``size" or ``dimension" of the interface. These estimates are closely related to the (lack of) regularity and the geometric properties of the free boundary. 

Our analysis is based on a pointwise gradient control (\Cref{pointwisegradientestimate}) derived from an intrinsic Harnack-type inequality and a suitable application of regularity estimates at points relatively close to the free boundary, recently established in \cite{ATV22}. This gradient control played a key role in \cite{P2} for the case $p=2$, but the proof there crucially relied on the linearity of the operator and does not extend to the degenerate setting. After proving a sharp non-degeneracy estimate (\Cref{sharp non-degeneracy}) and establishing the porosity of the free boundary, we obtain uniform estimates in a flatness regime, where minimizers exhibit improved geometric and analytical properties. These estimates are essential for obtaining refined Hausdorff measure bounds (\Cref{Hausdorff}). We go beyond the linear setting by leveraging the non-variational approach in \cite{AT13} for uniformly elliptic operators to derive these fine regularity estimates in a nonlinear regime. Our strategy thus differs significantly from that of \cite{P2} and requires entirely new ideas. While our results are formulated for the degenerate case $p>2$ due to the dependence on the regularity theory from \cite{ATV22}, they would also hold for the singular case $1<p<2$ were that regularity available. Additionally, they provide an alternative proof of Phillips' classical results, promising broader applicability in the analysis of free boundary problems. 

Recently, the case of a varying exponent, corresponding to $F(x,s) \sim s^{\gamma(x)}$, was investigated in \cite{ASTU26, STU25}. Allowing the singular exponent to vary makes the problem substantially richer, the central difficulty being to quantify how local geometric fluctuations of $\partial\{u>0\}$ influence both the regularity of the minimizer and the structure of the free boundary itself. Regarding the former, optimal regularity of minimizers was proved under a mild continuity assumption on the exponent in \cite{ASTU26} for the linear case $p=2$, and subsequently extended to the $p-$Laplacian in \cite{STU25}. Concerning the latter, a detailed analysis was developed in \cite{ASTU26} in the linear setting, and the free boundary was shown to be locally a $C^{1,\delta}-$surface, except on a negligible singular set of Hausdorff codimension at least $3$. The ideas introduced in this paper may serve as a stepping stone toward extending those results to the degenerate $p-$Laplacian regime.

The paper is organized as follows. In \Cref{prelim}, we introduce notation, rigorously formulate the problem, and present several preliminary results. \Cref{gradsec} provides an optimal pointwise gradient decay based on a refined growth estimate. In \Cref{nondegsec}, we establish a non-degeneracy estimate and, as a consequence, the porosity of the free boundary; we also disclose how minimizers can, in the positivity set, be controlled from below by their distance to the free boundary. The core of the paper lies in the remaining two sections: in \Cref{s5}, we analyze the behavior of minimizers within a universal flatness regime; finally, in \Cref{Hauss sec}, we prove the finiteness of the $(n-1)-$dimensional Hausdorff measure of the free boundary.

\section{Problem setting and preliminary results}\label{prelim}

Let $\Omega$ be an $n-$dimensional bounded domain and $p\ge2$. This restriction on $p$ is used solely to access the regularity results from \cite{ATV22}, which are valid only in this range. However, we emphasize that all other results in this paper also remain valid for $1<p<2$.

For a given non-negative boundary data $0 \leq g\in W^{1,p}(\Omega)$, we consider minimizers of the functional
\begin{equation}\label{1.1}
J(v):=\int_{\Omega} \left( \frac{|D v (x)|^p}{p} + \left[ v_+(x)\right]^\gamma \right) dx,
\end{equation}
among competitors in the set
\begin{equation}\label{minimizationset}
	\mathbb{K}:=\left\{v\in W^{1,p}(\Omega):\,\,v-g\in W^{1,p}_0(\Omega)\right\}.
\end{equation}

The non-differentiability of the functional is the price to be paid to avoid an extra constraint in the set of admissible competitors. In fact, minimizing \eqref{1.1} over \eqref{minimizationset} is equivalent to minimizing 
\begin{equation}\label{altJ}
\tilde{J}(v):=\int_{\Omega} \left( \frac{|D v (x)|^p}{p} + \left[ v(x)\right]^\gamma \right) dx
\end{equation}
over
$$
\tilde{\mathbb{K}}:=\left\{v\in W^{1,p}(\Omega):\,\,v-g\in W^{1,p}_0(\Omega), \ v\geq 0 \mbox{ a.e. in } \Omega\right\},
$$
as proven in \cite[Lemma 1.4]{PSU12}. We observe, in particular, that assuming the boundary datum is non-negative implies the non-negativity of the minimizer.

The corresponding Euler-Lagrange equation is the singular PDE, holding in the non-coincidence set $\{u>0\} \cap \Omega$,
 \begin{equation}\label{EL}
	\Delta_p u :=\Div\left(|D u|^{p-2}D u\right) =\gamma u^{\gamma-1},
\end{equation}
whose right-hand side blows up at free boundary points since $\gamma \in (0,1)$. 

Most of the results in the paper are valid for non-negative \textit{local minimizers} defined as follows. For a ball $B_r \subset\Omega$, set
$$
J_r(v):=\int_{B_r}\left(\frac{|D v(x)|^p}{p}+\left[ v_+(x)\right]^\gamma \right) dx.
$$
\begin{definition}\label{localminimizer}
A nonnegative function $u\in W^{1,p}(\Omega)$ is a local minimizer with respect to a ball $B_r$ if
$$
J_r(u)\le J_r(v),
$$
for all $v\in W^{1,p}(B_r)$ such that $v-u\in W_0^{1,p}(B_r)$. 
\end{definition}

\begin{remark}\label{scaling}
An important property of minimizers is the invariance under scaling. Observe that if $u$ is a local minimizer with respect to a ball $B_r$, then 
$$
u_s(x):= \frac{u(sx)}{s^{\frac{p}{p-\gamma}}}     
$$ 
is a local minimizer with respect to the ball $B_{\frac{r}{s}}$. More precisely,
$$
J_r(u)=s^{\frac{p\gamma}{p-\gamma}}J_{\frac{r}{s}}(u_s).
$$
\end{remark}

For $0 \leq g\in W^{1,p}(\Omega)$, the existence of a minimizer for \eqref{1.1} is established in \cite[Theorem 2.1]{ATV22} (see also \cite{LQT15} for the specific case of a zero obstacle), together with the bound
$$
        \|u\|_\infty\le\|g\|_\infty.
$$

The local $C^{1,\beta}-$regularity for minimizers of the functional
\begin{equation}\label{kappafunctional}
J_{\kappa}(u)=\int_{\Omega}\left(\frac{|D v(x)|^p}{p}+\kappa\left[ v_+(x)\right]^\gamma \right) dx
\end{equation}
is the object of \cite[Theorem 3.1]{ATV22}. More precisely, for every $\Omega^\prime \subset \Omega$, there exists a constant $C>0$, depending on $\dist(\Omega',\partial\Omega)$, $\|u\|_{L^\infty(\Omega)}$, $n$, $p$ and $\gamma$, but independent of $\kappa$, such that \begin{equation}\label{localregularity}
\|u\|_{C^{1,\beta}(\Omega')}\le C,
\end{equation}    
for the sharp exponent
\begin{equation}\label{beta}
\beta:=\min\left\{\alpha_p^-,\frac{\gamma}{p-\gamma}\right\}.
\end{equation}
Here, $\alpha_p$ is the optimal (unknown for $n\geq 3$) H\"older regularity exponent for the gradient of $p-$harmonic functions, and the notation $c^-$, for $c \in \R$, means any real number less than $c$. It is worth noting that, in the case $n=2$, from the results in \cite{IwaMan} (see also \cite{ATU17}), one has $\beta=\frac{\gamma}{p-\gamma}$. Observe also that when $\gamma \searrow 0$, the above regularity decreases to match the Lipschitz regularity obtained in \cite[Theorem 3.3]{DP05}. 

Moreover, in \cite[Theorem 4.1 and Theorem 6.1]{ATV22}, the growth of the minimizer away from free boundary points is revealed to be
\begin{equation}\label{growthestimate}
    u\le C\, r^{\frac{p}{p-\gamma}}, \quad \textrm{in} \ B_r(y),
\end{equation}
for any $r\in(0,r_0)$ and any $y\in\partial\{u>0\}\cap\Omega'$, where $\Omega'\Subset\Omega$, for universal constants $C>0$ and $r_0>0$. The approach is based on geometric tangential analysis and a fine perturbation, combined with an adjusted scaling argument, ensuring that, in the limit, one obtains a linear elliptic equation without the zero-order term. The intuition behind the proof is that the problem behaves essentially as an obstacle problem for a uniformly elliptic operator.

We conclude this section by showing that any minimizer $u$ of \eqref{1.1} is $p-$subharmonic and solves the corresponding Euler-Lagrange equation. As $u\in W^{1,p}(\Omega)$, $p-$subharmonicity is equivalent to proving that $u$ stays below any $p-$harmonic replacement, according to \cite[Chapter 5]{L19}. The proof is known and can be obtained by combining the tools used in \cite{AC81, DP05, LQT15, P1}. We include it here for the reader's convenience in the case $p>2$.

\begin{lemma}\label{subharmonic}
Let $B\subset\Omega$ be a ball, and let $u$ be a minimizer of \eqref{1.1}--\eqref{minimizationset}. Consider the $p-$harmonic replacement of $u$, \textit{i.e.}, the $p-$harmonic function $v$ in $B$ that agrees with $u$ on $\partial B$. Then $u\le v$ in $B$.
\end{lemma}
 
\begin{proof}
Set $w:=\min(u,v)$. We aim to show that, in fact, $w=u$. As $w-u\in W^{1,p}(B)$, one has $D(w-u)=0$ a.e. on $\{w=u\}$, \cite[Lemma A.4]{KS80}. Hence,
\begin{equation}\label{4.2}
\int_{B}|D w|^{p-2}D w\cdot D(w-u)=\int_{B}|D v|^{p-2}D v\cdot D(w-u)=0,
\end{equation}
where the last equality follows from the fact that $v$ is $p-$harmonic and agrees with $u$ on $\partial B$. On the other hand, defining for $0\le s\le 1$, 
$$
u_s(x):=su(x)+(1-s)w(x),
$$ 
recalling \eqref{4.2} and noting that $u_s-w=s(u-w)$, we have
\begin{equation*}
\begin{split}
    & \int_{B}\left(|D u|^p-|D w|^p\right)\\
    &=\int_0^1\frac{d}{ds}\left(\int_B|D u_s|^p\right)\,ds\\
    &= p \int_0^1 ds \int_B |D u_s|^{p-2} D u_s \cdot D(u-w)\\
    &= p \int_0^1 ds \int_B \left(|D u_s|^{p-2}D u_s-|D w|^{p-2} Dw \right) \cdot D(u-w)\\
    &= p \int_0^1 \frac{ds}{s} \int_B \left(|D u_s|^{p-2}D u_s-|D w|^{p-2}Dw \right) \cdot D(u_s-w).
\end{split}
\end{equation*}
Combining this with the well-known inequality
\begin{equation*}
    \left( |\xi|^{p-2}\xi-|\eta|^{p-2}\eta \right) \cdot (\xi-\eta) \geq c \, |\xi-\eta|^p,     
\end{equation*}
where $c>0$ is a constant depending only on $n$ and $p$, we reach
\begin{eqnarray*}
    \int_{B}\left(|D u|^p-|D w|^p\right)&\geq & cp\int_0^1\frac{ds}{s}\int_B|D (u_s-w)|^p\\
    &=& cp\int_0^1s^{p-1}\,ds\int_B|D(u-w)|^p\\
    &=& c \int_B|D(u-w)|^p \geq 0.
\end{eqnarray*}
Using this and the fact that $w\le u$, we obtain
$$
J(w)-J(u)\le\int_B(w_+^\gamma-u_+^\gamma)\le0.
$$
By the minimality of $u$, we have $J(w)=J(u)$, which only holds if $w=u$.
\end{proof}

\begin{proposition}\label{EL1}
If $u$ is a minimizer of \eqref{1.1}--\eqref{minimizationset}, then
\begin{equation}\label{maineq}
\Delta_pu =\gamma u^{\gamma-1}\quad \textrm{ in }\,\{u>0\},
\end{equation}
in the sense of distributions. 
\end{proposition}

\begin{proof}
By \eqref{localregularity}, we have that $u \in C^{1,\beta}_{\mathrm{loc}}(\Omega)$. 
Let $y \in \Omega$ be such that $u(y)>0$. By continuity, there exists $r>0$ such that 
$$u(x) \geq \frac{u(y)}{2} >0, \quad x \in B_r(y).$$
In particular, the term \(u^{\gamma-1}\) remains bounded in \(B_r(y)\).

Let $\xi \in C_0^\infty(B_r(y))$. Then, there exists $\varepsilon_0>0$ such that, for all
$|\varepsilon| \le \varepsilon_0$,
$$
u(x)+\varepsilon\xi(x) \ge \frac{u(y)}{4} > 0, \quad x \in B_r(y),
$$
and hence $u+\varepsilon\xi \in \tilde{\mathbb{K}}$. Since $u$ is a minimizer of \eqref{altJ} over $\tilde{\mathbb{K}}$, it follows that $\varepsilon=0$ is a minimizer of the function
$$
\varepsilon \longmapsto \tilde{J}(u+\varepsilon\xi).
$$
Therefore,
$$
\left.\frac{d}{d\varepsilon} \tilde{J}(u+\varepsilon\xi)\right|_{\varepsilon=0} = 0,
$$
which yields
$$
\int_{B_r(y)}
\left(
|D u|^{p-2} D u \cdot D\xi
+
\gamma u^{\gamma-1} \xi
\right)\,dx = 0.
$$
Since $y \in \{u>0\}$ was arbitrary, the conclusion follows.
\end{proof}
 
\section{A pointwise gradient estimate}\label{gradsec}

In this section, we prove an optimal pointwise gradient decay, which plays an essential role in the analysis of the free boundary. We elaborate on ideas from \cite{AS23} to derive an optimal oscillation estimate. 

\begin{lemma}\label{Harnack type inequality}
If $u$ is a local minimizer of \eqref{1.1} in $B_1$, then there exists $C>0$, depending only on $n$, $p$, $\gamma$ and $\|u\|_{L^\infty(B_1)}$, such that
$$
\sup_{B_r(x)}u\le C\left(r^{\frac{p}{p-\gamma}}+u(x)\right),
$$
for any $x\in B_{1/2}$ and $r\in(0,1/2)$.
\end{lemma}

\begin{proof}
Indeed, if the conclusion fails, then, for each $k \in \N$, there exist points $x_k\in B_{1/2}$, local minimizers $u_k$ in $B_1$, and $r_k\to0$ such that
\begin{equation}\label{contradiction}
    s_k:=\sup_{B_{r_k}(x_k)}u_k > k\left(r_k^{\frac{p}{p-\gamma}}+u_k(x_k)\right).
\end{equation}
Set
$$    
v_k(x):=s_k^{-1}u_k(x_k+r_kx),\,\,\,x\in B_1.
$$
Observe that
$$
\sup_{B_1}v_k=1 \qquad \textrm{and} \qquad v_k(0)+\frac{r_k^{\frac{p}{p-\gamma}}}{s_k} < \frac{1}{k}.
$$
On the other hand, $v_k$ minimizes
$$
J_{k}(v):=\int_{B_1}\left(\frac{|D v (x)|^p}{p}+\frac{r_k^p}{s_k^{p-\gamma}}\,\left[ v (x)\right]^\gamma\right)\,dx,
$$
over $\tilde{\mathbb{K}}$ in $B_1$, and, recalling \eqref{contradiction}, one has
\begin{equation}\label{5.3}
\frac{r_k^p}{s_k^{p-\gamma}}<\frac{1}{k^{p-\gamma}}<1.
\end{equation}
Hence, from \eqref{localregularity}, there exists a function $v_\infty$ defined in $B_1$ such that, up to a subsequence, $v_k \to v_\infty$ and $Dv_k \to Dv_\infty$ locally uniformly in $B_1$. Obviously, $v_\infty(0)=0$ and
\begin{equation}\label{rain}
    \sup_{B_1} v_\infty = 1.
\end{equation}

Furthermore, since
$$
J_{k}(v_k) \le J_{k}(v_k + \varepsilon \xi),
\quad \forall \varepsilon > 0,\ \forall \xi \in C_0^\infty(B_1),\ \xi \ge 0,
$$
then, in view of \eqref{5.3}, one has
$$
J_0(v_\infty) \le J_0(v_\infty + \varepsilon \xi),
$$
where
$$
J_0(v) := \int_{B_1} \frac{|D v|^p}{p}\, dx.
$$
This yields
$$
\int_{B_1} |D v_\infty|^{p-2} D u_\infty \cdot D \xi \, dx \ge 0,
\quad \forall \xi \in C_0^\infty(B_1),\ \xi \ge 0,
$$
and so, $v_\infty$ is $p-$superharmonic in $B_1$. Hence, since $v_\infty(0)=0$, we conclude from the strong minimum principle that $v_\infty \equiv0$, which contradicts \eqref{rain}.
\end{proof}

The following result, an immediate consequence of the previous lemma, is an intrinsic Harnack-type inequality.

\begin{corollary}\label{Harnack}
Let $u$ be a local minimizer of \eqref{1.1} in $B_1$, with $\|u\|_{L^\infty(B_1)}\le 2^{\frac{\gamma-p}{p}}$. There exists a constant $C>0$, depending only on $n$, $p$ and $\gamma$, such that, for each $x \in B_{1/2}$,
$$
\sup_{B_r(x)}u\le Cu(x),
$$
for $r:=\left[u(x)\right]^{\frac{p-\gamma}{p}}< \frac{1}{2}$. 
\end{corollary}

As a consequence, we obtain the following pointwise gradient estimate.

\begin{theorem}\label{pointwisegradientestimate}
If $u$ is a local minimizer of \eqref{1.1} in $B_1$, with $\|u\|_{L^\infty(B_1)}\le 2^{\frac{\gamma-p}{p}}$, then there exists a constant $C>0$, depending only on $n$, $p$, and $\gamma$, such that
$$
|D u(x)|^p\leq C\|u\|_{L^\infty(B_1)}^{p-\gamma}\, u^{\gamma}(x),\quad x\in B_{1/2}.
$$
\end{theorem}

\begin{proof} 
Since, locally, we have $\partial\{u>0\}\subset\{|Du|=0\}$, it is enough to prove the result for $x\in B_{1/2}$ such that $u(x)>0$. 

If $\|u\|_{L^\infty(B_1)} = 2^{\frac{\gamma-p}{p}}$, then setting 
$$
    v(y):=r^{-\frac{p}{p-\gamma}} \, u(x+ry), \quad y \in B_1,
$$
where  $r:=\left[u(x)\right]^{\frac{p-\gamma}{p}}<\frac{1}{2}$, and applying \Cref{Harnack}, we get 
$$
\sup_{B_1}v\le C,
$$ 
for some universal constant $C>0$. On the other hand, since $v$ is a local minimizer in $B_1$, from \eqref{localregularity}, we get the Lipschitz bound
$$
|D v(0)|\le C'C,
$$
for a universal constant $C'>0$. Observe now that 
$$
|D v(0)|=r^{-\frac{\gamma}{p-\gamma}}|D u(x)|=|D u(x)|u^{-\frac{\gamma}{p}}(x),
$$
and the result follows.

If $\|u\|_{L^\infty(B_1)} < 2^{\frac{\gamma-p}{p}}$, then set 
$$
v(x):= \kappa u(x) \quad \mbox{in} \quad B_1,
$$
where $\kappa:=2^{\frac{\gamma-p}{p}}\|u\|_{L^\infty(B_1)}^{-1}$ and observe that $v$ is a minimizer of
$$
\int_{B_1}\left(\frac{|D v (x)|^p}{p}+\kappa^{\gamma-p}\,[v_+(x)]^\gamma\right)\,dx.
$$
Hence, by \eqref{kappafunctional}-\eqref{localregularity}, $v$ is locally of class $C^{1,\beta}$. Since $\|v\|_{L^\infty(B_1)}=2^{\frac{\gamma-p}{p}}$, arguing as before leads to
$$
\kappa^{\frac{p-\gamma}{p}}|D u(x)|u^{-\frac{\gamma}{p}}(x)=|D v(x)|v^{-\frac{\gamma}{p}}(x)\le C.
$$
\end{proof}

\section{Non-degeneracy and porosity}\label{nondegsec}

In this section, we prove non-degeneracy and positive density results for minimizers of \eqref{1.1}, thereby establishing the porosity of the free boundary. 
\begin{proposition}\label{non-degeneracy}
If $u$ is a minimizer of \eqref{1.1} and $x_0\in\overline{\{u>0\}}$, then there exists a constant $c>0$, depending only on $n$, $p$ and $\gamma$, such that    
\begin{equation}\label{non-degeneracyestimate}
    \sup_{\partial B_r(x_0)}u\ge cr^{\frac{p}{p-\gamma}},
\end{equation}
for any $r<\dist(x_0,\partial \Omega)$.
\end{proposition}

\begin{proof}
By continuity, it is enough to prove the result for $x_0\in\{u>0\}$. Fix $r<\dist(x_0,\partial \Omega)$ and set
$$\lambda:=\frac{p-\gamma}{p-1} >1.$$
A direct formal calculation shows that, in $\{u>0\}\cap B_r(x_0)$, one has
\begin{equation*}\label{transformation}
\Delta_p(u^\lambda) = \lambda^{p-1}u^{-\gamma} \left[ (1-\gamma)|D u|^p+u \Delta_p u \right],
\end{equation*}
which, recalling \Cref{EL1}, leads to    
$$\Delta_p(u^\lambda)=\lambda^{p-1}u^{-\gamma}\left[(1-\gamma)|D u|^p+\gamma u^\gamma\right]\ge\gamma\lambda^{p-1}>0.$$   
Moreover,
$$\Delta_p(c|x-x_0|^{\frac{p}{p-1}})=c^{p-1}n\left(\frac{p}{p-1}\right)^{p-1}.$$
Choosing $c>0$ universally small, we ensure
$$
\Delta_p(u^\lambda)\ge\Delta_p(c|x-x_0|^{\frac{p}{p-1}})\,\,\,\textrm{ in }\,\,\,\{u>0\}\cap B_r(x_0).
$$
Furthermore, $u^\lambda(x_0)>0=c|x_0-x_0|^{\frac{p}{p-1}}$. By the comparison principle, there is $y\in\partial\left(\{u>0\}\cap B_r(x_0)\right)$ such that $$u^\lambda(y)\ge c|y-x_0|^{\frac{p}{p-1}}.$$
Since at $\partial\{u>0\}\cap B_r(x_0)$, one has
$$u^\lambda(x)=0<c|x-x_0|^{\frac{p}{p-1}},$$ 
necessarily $y\in\{u>0\}\cap\partial B_r(x_0)$. Hence, 
$$
u^\lambda(y)\ge c|y-x_0|^{\frac{p}{p-1}}=cr^{\frac{p}{p-1}},
$$
and the result follows.
\end{proof}

\begin{remark}\label{REM}
We highlight that the pointwise computations for $\Delta_p(u^\lambda)$ in the above proof are standard (\textit{cf.} \cite{LS03}) and valid, in the weak sense, throughout the entire positivity set $\{u>0\}$. Indeed, let $\phi\in C_0^\infty(\{u>0\})$ be a test function and define
$$
\eta:=u^{1-\gamma}\phi.
$$
Since $\supp(\phi)\Subset\{u>0\}$, there exists a subdomain $\Omega'\Subset\{u>0\}$ such that $\supp(\phi)\subset\Omega'$. Hence, for some constant $c>0$, we have
$u\ge c$ in $\Omega'$. In particular, $u^{1-\gamma} \in L^\infty(\Omega')$. Moreover,
$u\in W^{1,\infty}(\Omega')$, so
$$
\eta\in W^{1,p}_0(\Omega').
$$
Testing \eqref{maineq} against $\eta$, gives
$$
\int_{\Omega'} |Du|^{p-2}Du\cdot D(u^{1-\gamma} \phi) \, dx = -\int_{\Omega'} \gamma u^{\gamma-1}u^{1-\gamma} \phi\,dx = - \gamma \int_{\Omega'} \phi\,dx,
$$
and, substituting the expression for $D\eta$, we obtain
$$
\int_{\Omega'} u^{1-\gamma} |Du|^{p-2}Du\cdot D\phi \, dx + (1-\gamma) \int_{\Omega'} u^{-\gamma}\phi|Du|^p\,dx = - \gamma \int_{\Omega'} \phi\,dx.
$$
On the other hand,
$$
|D(u^\lambda)|^{p-2}D(u^\lambda) = \lambda^{p-1} u^{1-\gamma} |Du|^{p-2}Du,
$$
and, combining the previous identities, we arrive at
\begin{eqnarray*}
& \displaystyle \int_{\{u>0\}} |D(u^\lambda)|^{p-2}D(u^\lambda)\cdot D\phi\,dx \nonumber \\
& = \lambda^{p-1} \displaystyle\int_{\{u>0\}} \left( -\gamma 
-(1-\gamma) u^{-\gamma}|Du|^p \right) \phi\,dx,
\end{eqnarray*}
for every $\phi\in C_0^\infty(\{u>0\})$,
which is precisely the desired weak formulation for $\Delta_p(u^\lambda)$ in $\{u>0\}$. Notably, the argument only requires the $C^1-$regularity of $u$, which is guaranteed by the local $C^{1,\beta}-$estimates for minimizers.
\end{remark}

\begin{remark}
Note that from \eqref{growthestimate}, we already know that in $B_r(x_0)$, with $x_0\in\partial\{u>0\}$, the supremum of a minimizer grows at most like $r^{\frac{p}{p-\gamma}}$. \Cref{non-degeneracy} reveals that it grows exactly as $r^{\frac{p}{p-\gamma}}$.
\end{remark}

\begin{corollary}\label{integral non-degeneracy}
If $u$ is a minimizer of \eqref{1.1}, and $x_0\in\overline{\{u>0\}}$, then there exists a constant $c^\ast >0$, depending only on $n$, $p$, $\gamma$ and $\|u\|_{L^\infty(B_1)}$, such that
$$
\fint_{B_r(x_0)}u\,dx\ge c^\ast\,r^{\frac{p}{p-\gamma}},
$$
for any $r<\dist(x_0,\partial\Omega)$.
\end{corollary}

\begin{proof}
    Indeed, given $r<\dist(x_0,\partial\Omega)$, \Cref{non-degeneracy} guarantees the existence of $y\in \partial B_{r}(x_0)$ such that 
\begin{equation}\label{diplomat}
u(y)\ge c\,r^{\frac{p}{p-\gamma}}>0.
\end{equation}
Recall \eqref{definition of alpha} and fix 
$$
\rho = \frac12 \left( \frac{c}{C} \right)^{\frac{1}{\alpha}},
$$
where $c$ is the constant from \Cref{non-degeneracy} and $C$ is the constant from \Cref{Harnack type inequality}. For any $z \in B_{\rho r}(y)$, we have, using \eqref{diplomat} and \Cref{Harnack type inequality},
$$
c\,r^{\alpha} \leq u(y) \leq \sup_{B_{\rho r}(z)}u  \le  C \left[ (\rho r)^{\alpha} + u(z) \right].
$$
Hence, 
$$
u(z) \geq \frac{1}{C}\left( cr^{\alpha}-C (\rho r)^{\alpha} \right) =  \frac{c}{C} \left( 1- \left( \frac12 \right)^{\alpha} \right) r^{\alpha} = \tilde{c}\, r^{\alpha},
$$
for any $z \in B_{\rho r}(y)$, with $\tilde{c}$ depending only on $n$, $p$, $\gamma$ and $\|u\|_{L^\infty(B_1)}$.
Thus,
$$
\fint_{B_r(x_0)}u\, dx \geq \left( \frac{\rho}{2} \right)^{n} \fint_{B_{\rho r}(y)\cap B_r(x_0)}u \, dx \geq \left( \frac{\rho}{2} \right)^{n} \tilde{c} \, r^{\alpha},
$$
and the result follows with $c^\ast := \left( \frac14 \left( \frac{c}{C} \right)^{1-\frac{\gamma}{p}} \right)^{n} \tilde{c}$.
\end{proof}

\begin{corollary}\label{positivedensity}
If $u$ is a minimizer of \eqref{1.1}, then, for every $\Omega'\Subset\Omega$, there exists a constant $\tau \in(0,1)$, depending only on $\Omega'$, $n$, $p$ and $\gamma$, such that, for any $x_0\in\Omega'\cap\partial\{u>0\}$ and any $r>0$ small enough, one has
$$
\frac{|\{u>0\}\cap B_r(x_0)|}{|B_r(x_0)|}\ge\tau.
$$
\end{corollary}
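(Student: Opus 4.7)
The plan is to derive the positive density bound from the non-degeneracy estimate of Lemma \ref{non-degeneracy} combined with the optimal upper growth bound \eqref{growthestimate}, locating inside $\{u>0\}\cap B_r(x_0)$ a ball whose radius is a universal fraction of $r$.

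First, I would apply Lemma \ref{non-degeneracy} at scale $r/2$ to obtain a point $z\in\partial B_{r/2}(x_0)$ satisfying
$$
u(z)\ \geq\ c\left(\frac{r}{2}\right)^{\frac{p}{p-\gamma}},
$$
where $c>0$ is the universal constant from the non-degeneracy estimate. Set $\rho:=\dist(z,\partial\{u>0\})$. I claim $\rho\geq\kappa r$ for some $\kappa\in(0,1/2)$ depending only on $\Omega'$, $n$, $p$, $\gamma$. Indeed, if $\rho\geq r/2$ there is nothing to prove; otherwise, pick $y'\in\partial\{u>0\}$ realizing the distance from $z$, so that $z\in B_\rho(y')$ and $y'$ lies in a slightly enlarged subdomain $\Omega''$ with $\Omega'\subset\subset\Omega''\subset\subset\Omega$ (for $r$ small enough, depending on $\dist(\Omega',\partial\Omega'')$). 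The growth estimate \eqref{growthestimate} applied at $y'$ then gives
$$
u(z)\ \leq\ C\,\rho^{\frac{p}{p-\gamma}},
$$
and matching this with the lower bound above forces $\rho\geq\kappa r$ for $\kappa:=\frac{1}{2}(c/C)^{\frac{p-\gamma}{p}}$ (taking the minimum of this value with $1/2$ if necessary).

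Once $\rho\geq\kappa r$ is established, $B_{\kappa r}(z)\subset\{u>0\}$; and since $|z-x_0|=r/2$ and $\kappa\leq 1/2$, the triangle inequality yields $B_{\kappa r}(z)\subset B_r(x_0)$. Consequently,
$$
|\{u>0\}\cap B_r(x_0)|\ \geq\ |B_{\kappa r}(z)|\ =\ \kappa^n\,|B_r(x_0)|,
$$
so $\delta:=\kappa^n$ does the job.

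I do not anticipate a serious obstacle here, as the argument is the standard density dichotomy: non-degeneracy forces a substantial value of $u$ at some interior point of $B_r(x_0)$, while the sharp upper growth forbids such a value too close to the free boundary, producing a definite gap. The only technical point requiring care is ensuring that \eqref{growthestimate} applies at the competing free boundary point $y'$, which is why the constant $\delta$ inherits the dependence on $\Omega'$ through the margin between $\Omega'$ and $\partial\Omega$.
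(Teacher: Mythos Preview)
Your proof is correct and takes essentially the same route as the paper's: non-degeneracy produces a point $z\in\partial B_{r/2}(x_0)$ with $u(z)\gtrsim r^{p/(p-\gamma)}$, and the growth estimate \eqref{growthestimate} then forces $\dist(z,\partial\{u>0\})\gtrsim r$, yielding a ball $B_{\kappa r}(z)\subset\{u>0\}\cap B_r(x_0)$. The paper's version is terser---it simply asserts $u>0$ in $B_{\rho r}(y)$ for ``$\rho>0$ small'' without explicitly invoking \eqref{growthestimate} to justify that $\rho$ is universal---so your write-up is in fact the more complete of the two.
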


\begin{proof}
Indeed, as in the proof of \Cref{integral non-degeneracy}, $u>0$ in $B_{\rho r}(y)$, where $y\in\partial B_r(x_0)$. Therefore, 
$$
\frac{|\{u>0\}\cap B_r(x_0)|}{|B_r(x_0)|}\ge\frac{|B_{\rho r}(y)\cap B_r(x_0)|}{|B_r(x_0)|} \geq \left( \frac{\rho}{2} \right)^{n} =:\tau.
$$    
\end{proof}

We recall the definition of a porous set to state another consequence of the non-degeneracy estimate.

\begin{definition}
A set $E\subset\R^n$ is called porous, with porosity constant $\delta>0$, if there exists a constant $\rho>0$ such that, for each $x\in E$ and $r\in(0,\rho)$, there exists a $y\in\R^n$ such that 
$$
B_{\delta r}(y)\subset B_r(x)\setminus E.
$$
\end{definition}

The Hausdorff dimension of a porous set does not exceed $n-C\delta^n$, where $C>0$ is a constant depending only on $n$ (see, for example, \cite{MV87}). Hence, the Lebesgue measure of a porous set is zero.

\begin{corollary}\label{porosity}
Let $u$ be a minimizer of \eqref{1.1}. If $x_0\in\Omega$ and $r>0$ are such that $B_{2r}(x_0)\subset\Omega$, then the set 
$$
E:=\partial\{u>0\}\cap\overline{B_r(x_0)}
$$ 
is porous. Hence, the free boundary $\partial\{u>0\}$ is a set of Lebesgue measure zero.
\end{corollary}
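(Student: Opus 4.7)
The plan is to combine the non-degeneracy from Lemma \ref{non-degeneracy} with the growth estimate \ref{growthestimate} to show that, at any free boundary point $x \in E$ and any sufficiently small scale $r'$, one can find a ball of comparable size inside $B_{r'}(x)$ that stays clear of the free boundary. This is the standard way free boundary porosity is derived in Alt-Phillips type problems, and here both ingredients are already in place.

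Concretely, fix $\rho:=r$ (or any slightly smaller radius) so that for every $x\in E$ and every $r'\in(0,\rho)$ one has $B_{r'}(x)\subset B_{2r}(x_0)\subset\Omega$, placing us in the regime where the growth estimate \ref{growthestimate} and Lemma \ref{non-degeneracy} are applicable with universal constants. Apply Lemma \ref{non-degeneracy} at $x$ on the ball $B_{r'/2}(x)$ to produce a point
$$
y\in\partial B_{r'/2}(x)\quad\text{with}\quad u(y)\ge c\left(\tfrac{r'}{2}\right)^{\frac{p}{p-\gamma}}.
$$
Next, I would argue that $y$ is quantitatively away from $\partial\{u>0\}$. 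If there were a free boundary point $z$ within distance $\eta r'$ of $y$, the growth estimate \ref{growthestimate} centered at $z$ would give $u(y)\le C(\eta r')^{p/(p-\gamma)}$. Comparing the two bounds forces
$$
\eta \;\ge\; \tfrac{1}{2}\left(\tfrac{c}{C}\right)^{\frac{p-\gamma}{p}} =: \eta_0,
$$
so for any $\eta<\eta_0$ the ball $B_{\eta r'}(y)$ cannot meet $\partial\{u>0\}$, and in particular does not meet $E$.

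It then remains to choose the porosity constant. Pick $\delta:=\tfrac{1}{2}\min\{\eta_0,1\}$; since $|y-x|=r'/2$ and $\delta<1/2$, one automatically has $B_{\delta r'}(y)\subset B_{r'}(x)$, while by the previous paragraph $B_{\delta r'}(y)\cap E=\varnothing$. This is exactly the porosity property at $x$ with constant $\delta$ on scales $r'\in(0,\rho)$, and $\delta$ depends only on the universal constants $c,C$ in the non-degeneracy and growth estimates, hence only on $n,p,\gamma,\|u\|_\infty,\dist(\cdot,\partial\Omega)$. Finally, invoking the cited bound from \cite{MV87} for the Hausdorff dimension of porous sets, $E$ has Hausdorff dimension at most $n-C\delta^n<n$, so $|E|=0$; covering $\partial\{u>0\}$ by countably many such balls yields the claim for the whole free boundary.

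There is no real obstacle here: the two delicate ingredients, non-degeneracy and the matching growth bound, are already proved, and the rest is a bookkeeping argument. The only minor point to be careful about is ensuring that $\rho$ is chosen so that both Lemma \ref{non-degeneracy} and \ref{growthestimate} apply with universal constants on every ball $B_{r'}(x)$ with $x\in\overline{B_r(x_0)}$ and $r'\in(0,\rho)$, which the hypothesis $B_{2r}(x_0)\subset\Omega$ exactly provides.
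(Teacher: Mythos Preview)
Your proposal is correct and follows essentially the same approach as the paper: non-degeneracy (Lemma~\ref{non-degeneracy}) produces a point $y$ on $\partial B_{r'/2}(x)$ where $u$ is large, and the growth estimate \eqref{growthestimate} then forces a quantitative distance from $y$ to the free boundary, yielding a clean ball inside $B_{r'}(x)\setminus E$. The only cosmetic differences are that the paper phrases the distance argument via $d(y):=\dist\bigl(y,\overline{B_{2r}(x_0)}\setminus\{u>0\}\bigr)$ and then passes to an auxiliary point $z\in[x,y]$ to ensure the ball sits inside $B_{r'}(x)$, whereas you build $\delta\le\tfrac12$ into the definition and place the ball directly at $y$; also, you are more explicit than the paper about verifying porosity at every scale $r'\in(0,\rho)$ rather than just at $r$. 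One tiny slip: with your choice $\delta=\tfrac12\min\{\eta_0,1\}$ you only get $\delta\le\tfrac12$, not strict inequality, but the inclusion $B_{\delta r'}(y)\subset B_{r'}(x)$ still holds since $|w-x|<\delta r'+\tfrac{r'}{2}\le r'$.
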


\begin{proof}
Let $x\in E$. We have $B_{r/2}(x)\subset B_{2r}(x_0)\subset\Omega$. From \Cref{non-degeneracy}, there exists $y\in\partial B_{r/2}(x)$ such that 
$$
u(y)\ge cr^{\frac{p}{p-\gamma}},
$$
for a constant $c>0$ depending only on $n$, $p$ and $\gamma$. Hence, 
$$
y\in B_{2r}(x_0)\cap\{u>0\}.
$$
Set $d(y):=\dist \left(  y, \overline{B_{2r}(x_0)}\setminus \{u>0\} \right)$, then \eqref{growthestimate} provides
$$
u(y)\le C\left[d(y)\right]^{\frac{p}{p-\gamma}},
$$
for a constant $C>0$ depending only on $n$, $p$ and $\gamma$. Therefore, setting 
$$
\delta:=\min\left\{\frac{1}{2},\left[cC^{-1}\right]^{\frac{p-\gamma}{p}}\right\}<1,
$$ 
we have
$$
d(y)\ge\delta r.
$$
Hence, $B_{\delta r}(y)\subset B_{d(y)}(y)\subset\{u>0\}$. In particular,  
$$
B_{\delta r}(y)\cap B_r(x)\subset\{u>0\}.
$$
On the other hand, if $z\in[x,y]$ is such that $|z-y|=\delta r/2$, then
$$
B_{(\delta/2)r}(z)\subset B_{\delta r}(y)\cap B_r(x).
$$
Indeed, if $z^*\in B_{(\delta/2)r}(z)$, then
$$
|z^*-y|\le|z^*-z|+|z-y|<\frac{\delta r}{2}+\frac{\delta r}{2}=\delta r
$$
and, since $|x-y|=|x-z|+|z-y|$,
$$
|z^*-x|\le|z^*-z|+|x-y|-|z-y|<\frac{\delta r}{2}+r-\frac{\delta r}{2}=r.
$$
Thus,
$$
B_{(\delta/2)r}(z)\subset B_{\delta r}(y)\cap B_r(x)\subset B_r(x)\setminus\partial\{u>0\}\subset B_r(x)\setminus E,
$$
\textit{i.e.}, $E$ is porous with porosity constant $\delta/2$. 
\end{proof}

We close this section with yet another non-degeneracy result revealing that minimizers can be controlled from below by their distance to the free boundary in the positivity set.
\begin{remark}\label{important remark}
    Since minimizers of    
    \begin{equation}\label{mumin}
    \int_{B_1} \left( \frac{|Du|^p}{p}+\mu u_+^{\gamma} \right)\,dx,
\end{equation}
where $\mu\in(0,1]$ are locally of class $C^{1,\beta}$, for a certain $\beta\in(0,1)$ independent of $\mu$ \cite[Theorem 3.1]{ATV22}, the estimate in \Cref{Harnack type inequality} remains true for such minimizers and is independent of $\mu$. Observe that \Cref{non-degeneracy} also remains valid for minimizers of \eqref{mumin}, independent of $\mu$.
\end{remark}

\begin{theorem}\label{sharp non-degeneracy}
If $u$ is a local minimizer of \eqref{1.1} in $B_1$, then there exists a universal constant $c_0>0$ such that, in $\{u>0\}$, one has
\[
    u(x) \ge c_0 \,\left[d(x)\right]^\frac{p}{p-\gamma},
\]
where 
\begin{equation}\label{definition of d}
    d(x):=\dist \left( x, \partial\{u>0\} \right).
\end{equation}
\end{theorem}

\begin{proof}
In light of \Cref{important remark}, we prove the result for minimizers of \eqref{mumin}. More precisely, we show that if $u$ is a minimizer of \eqref{mumin}, then there exist universal constants
\( \mu_0>0 \) and \( c_0>0 \) such that the conclusion holds for $\mu \leq \mu_0$. Indeed, if not, then there exists a sequence $u_k$ of minimizers of \eqref{mumin}, with $\mu=1/k$ and $x_k\in \{u_k > 0\}$, satisfying 
\[
d_k:=\dist \left( x_k, \partial\{u_k>0\} \right)\longrightarrow  0,
\]
such that 
\begin{equation}\label{pointwise}
    u_k(x_k) \le \frac{1}{k} d_k^\alpha,
\end{equation}
where $\alpha=\frac{p}{p-\gamma}$, as defined by \eqref{definition of alpha}. Set now
\begin{equation*}
    v_k(y):= \frac{1}{d_k^\alpha} u_k(x_k + d_k y).
\end{equation*}
As observed in \Cref{scaling}, $v_k$ is a minimizer of \eqref{mumin} in $B_{1/d_k}$. By \Cref{Harnack type inequality} and \eqref{pointwise}, one has
\[
0 \leq v_k(y) = \frac{1}{d_k^\alpha} u_k(x_k + d_k y) \leq \frac{1}{d_k^\alpha}C(d_k^\alpha + u_k(x_k)) \leq 2C.
\]
Recalling \Cref{non-degeneracy} and \Cref{important remark}, we have
\begin{equation}\label{nondeglim}
    \sup_{B_{1/2}} v_k \geq c_0 \left(\frac{1}{2} \right)^\alpha,
\end{equation}
for a universal constant $c_0 > 0$. Then, up to a subsequence, $v_k \to w$ in $C_{\loc}^{1,\beta}$, as $k \to \infty$ \cite[Theorem 3.1]{ATV22}, with $w$ satisfying
\[
\int_{B_1} |D w|^{p-2} D w \cdot D \varphi \, dx \geq 0, \quad \forall \varphi \in C^\infty_0(B_1), \quad \varphi \geq 0.
\]
Thus, $w\ge0$ is a $p-$superharmonic function in $B_1$. Furthermore, \eqref{pointwise} implies $w(0) = 0$, so, by the strong maximum principle \cite[Corollary 2.22]{L19}, $w\equiv0$. But, from \eqref{nondeglim}, we get 
\[
\sup_{B_{1/2}} w \geq c_0 \left(\frac{1}{2} \right)^\alpha,
\]
reaching a contradiction.

It remains to note that if $u$ is a minimizer of \eqref{1.1}, then $\tilde{u}:=\mu_0^{\frac{1}{p-\gamma}}u$ is a minimizer of \eqref{mumin} for $\mu=\mu_0$.
\end{proof}

\section{Universal flatness regime}\label{s5}

In this section, we study the behavior of minimizers in a flatness regime, where they exhibit improved geometric and analytic properties, which are crucial for deriving precise Hausdorff measure estimates, as detailed in the next section. The analysis is particularly relevant, as regions where minimizers are flat are close to the free boundary.

We first show that, in a universally flat regime, the function \( u^{\frac{p - \gamma}{p}} \) is \( p \)-subharmonic. This property is instrumental in analyzing the behavior of \( u \) near the free boundary. 

\begin{proposition}\label{subsol}
If $u$ is a local minimizer of \eqref{1.1} in $B_1$, then there exists $\delta_\star>0$, depending only on $n$, $p$ and $\gamma$, such that 
$$
\Delta_p\left(u^{\frac{p-\gamma}{p}}\right) \geq 0 \quad \mbox{in} \quad \left\{0<u\leq\delta_\star^{\frac{p}{p-\gamma}}\right\} \cap B_{1/2}.
$$
\end{proposition}

\begin{proof}
Using \Cref{EL1} and arguing as in \Cref{REM}, a direct computation yields, in $\{u>0\}$,
\begin{equation}\label{trieste2024}
\Delta_p(u^{\frac{1}{\alpha}})=\frac{1}{\alpha^{p-1}}u^{(\frac{1}{\alpha}-1)(p-1)-1}\left[\left(\frac{1}{\alpha}-1\right)(p-1)|Du|^p+\gamma u^\gamma\right],
\end{equation}
where $\alpha>1$ is defined by \eqref{definition of alpha}.
Recall that, from \Cref{pointwisegradientestimate},
\begin{equation}\label{magicchoice}
|Du|^p \le C\delta^{p} u^{\gamma} \quad \mbox{in} \quad \left\{ u\leq \delta^\alpha \right\} \cap B_{1/2}, 
\end{equation}
for a constant $C>0$ depending only on $n$, $p$ and $\gamma$. For the choice 
\begin{equation}\label{definition of delta star}
    \delta = \delta_\star:= \left(\frac{\gamma\alpha}{C(\alpha-1)(p-1)}\right)^{\frac{1}{p}} = \left(\frac{p}{C(p-1)}\right)^{\frac{1}{p}},
\end{equation}
estimate \eqref{magicchoice} ensures that the negative contribution arising from the gradient term in \eqref{trieste2024} is dominated by the positive zero-order term. More precisely, in \(\{u \le \delta_\star^\alpha\} \cap B_{1/2}\) we have
$$
\left(\frac{1}{\alpha}-1\right)(p-1)|Du|^p+\gamma u^\gamma = -\gamma \frac{(p-1)}{p} |Du|^p+\gamma u^\gamma \ge -\gamma u^\gamma +\gamma u^\gamma = 0,
$$
which combined with \eqref{trieste2024} concludes the proof.
\end{proof}

Next, we show that solutions restricted to balls centered at the free boundary (and with universally small radii) fall within the smallness regime described in \Cref{subsol}. 

\begin{proposition}\label{balls in flatness}
If $u$ is a local minimizer of \eqref{1.1} in $B_1$, then there exists $r_\star>0$, depending only on $n$, $p$, $\gamma$ and $\|u\|_{L^\infty (B_1)}$, such that, for each $x \in\partial\{u>0\}$ and $r \leq r_\star$, we have
$$
B_r(x)\subset \left\{ u \leq \delta_\star^{\frac{p}{p-\gamma}}\right\},
$$
where $\delta_\star>0$ is defined by \eqref{definition of delta star}.
\end{proposition}

\begin{proof}
We know from \eqref{growthestimate} that there exist universal constants $C>0$ and $r_0>0$ such that
$$   
u\le C\, r^{\frac{p}{p-\gamma}}, \quad \textrm{in} \ B_r(x),
$$
for any $r\in(0,r_0)$ and $x\in \partial\{u>0\}$. Let 
\begin{equation}\label{definition of r star}
    r_\star:= \min \left\{ r_0, \delta_\star C^{-\frac{p-\gamma}{p}} \right\}.
\end{equation}
Now, if $x\in \partial\{u>0\}$, $r\le r_\star$ and $y\in B_r(x)$, we have
$$
u(y) \leq C\,r^{\frac{p}{p-\gamma}} \leq C\,{r_\star}^{\frac{p}{p-\gamma}} \leq C\,\left( \delta_\star C^{-\frac{p-\gamma}{p}}\right)^{\frac{p}{p-\gamma}} = \delta_\star^{\frac{p}{p-\gamma}},
$$
which gives the desired inclusion.
\end{proof}

\begin{figure}[ht!]
\centering
\begin{tikzpicture}[scale=2.0]

\draw (-3,3) -- (-3,0) -- (3,0) -- (3,3) -- (-3,3);

\filldraw [black] (-0.2,0.75) circle (0.0pt) node[below left, gray] {$0<u\leq \delta_\star^{\frac{p}{p-\gamma}}$};

\filldraw [black] (-1.1,2.3) circle (0.0pt) node[below left, black] {$u> \delta_\star^{\frac{p}{p-\gamma}}$};

\filldraw [black] (2.4,1.6) circle (0.0pt) node[below left, gray] {$u=0$};

\filldraw [black] (0.64,1.64) circle (0.0pt) node[below left, gray] {$r_\star$};

\filldraw [black] (0,3.6) circle (0.0pt) node[below left, gray] {$\Delta_p(u^{\frac{p-\gamma}{p}})\geq 0$};

\filldraw [gray, opacity=0.7] (-3,3) -- (-3,0) -- (-2.4,0) to [out=80,in=0] (-2.5,0.7) to [out=45,in=180] (-1,1)  to [out=30,in=-30] (0.2,2.5) -- (-0.1,2.3) -- (0.2,3) -- (-3,3);

\filldraw [gray, opacity=0.4] (-3,3) -- (-3,0) -- (1.5,0) -- (1,0.2) -- (0.4,1) to [out=60,in=180] (1.1,1.5)  to [out=90,in=180] (2.7,3) -- (-3,3);

\filldraw [-, gray, opacity=0.7] (-0.3,3.2) -- (0,3.2);

\filldraw [->, gray, opacity=0.7] (0,3.2) -- (0.9,1.7);

\draw[gray!100, fill=gray!60, opacity=0.3] (0.8,1.43) circle (0.73cm);

\filldraw [-, gray, opacity=0.7, dashed] (0.29,1.9) -- (0.8,1.43);

\filldraw[gray!80] (0.8,1.43) circle (0.5pt) node[below] {$x$};
\end{tikzpicture}
\caption{Behavior of minimizers in the flatness regime.}
\end{figure}

\medskip

As a primary consequence of the propositions above, we deduce an integral estimate for \( u^{-\gamma}|Du|^p \) in the flatness regime. This result plays a crucial role in deriving Hausdorff measure estimates for the free boundary. 

\begin{lemma}\label{integral estimate in strip}
Let $\alpha>1$, $\delta_\star>0$ and $r_\star>0$ be defined, respectively, by \eqref{definition of alpha}, \eqref{definition of delta star} and \eqref{definition of r star}. If $u$ is a minimizer of \eqref{1.1} in $B_1$, then there exists $C>0$, depending only on $n$, $p$, $\gamma$ and $\|u\|_{L^\infty (B_1)}$, such that, for every $0<r\leq r_\star$ and $0<\delta\le\delta_\star$, we have
\begin{equation}\label{estimate on strip}
\displaystyle\int\limits_{\{0<u\leq\delta^\alpha\}\cap B_r(x_0)}u^{-\gamma}|Du|^p\,dx \leq C\delta r^{n-1},
\end{equation}
for $x_0\in\partial\{u>0\} \cap B_{1/2}$.
\end{lemma}

\begin{proof}
Let $\varphi$ be a non-decreasing Lipschitz function in $\mathbb{R}^+$ such that $\varphi(t)\le t$, for any $t\ge0$, and  
\begin{equation*}
\varphi(t)=
\begin{cases}
0 \quad \textrm{if} \quad t\le\frac{1}{2},\\
t \quad \textrm{if} \quad t\ge 1.
\end{cases}
\end{equation*}
Fix $\delta\leq\delta_\star$ and, for each $0<\varepsilon<\delta$, set
\begin{equation*}\label{auxiliary}    \varphi_\varepsilon(t):=\varepsilon \varphi\left(\frac{t}{\varepsilon}\right)
\end{equation*}
and define
\begin{equation}\label{definition of omega}
    \omega:=\min \left\{ u^\frac{1}{\alpha},\delta \right\}.
\end{equation}
Now, fix $r\le r_\star$. By \Cref{balls in flatness}, we have 
\begin{equation} \label{keira}
B_r(x_0)\subset \left\{ u \leq \delta_\star^{\frac{p}{p-\gamma}}\right\}.
\end{equation}
Observe first that
\begin{equation}\label{equality}
    \begin{split}    &\int\limits_{B_r(x_0)}|Du^{\frac{1}{\alpha}}|^{p-2}D(u^\frac{1}{\alpha})\cdot D(\varphi_\varepsilon(\omega))\,dx\\
    &=\int\limits_{\left\{u<\varepsilon^\alpha\right\}\cap B_r(x_0)}|Du^{\frac{1}{\alpha}}|^{p-2}D(u^\frac{1}{\alpha})\cdot D(\varphi_\varepsilon(\omega))\,dx\\
    &\quad+\int\limits_{\left\{\varepsilon^\alpha \leq u \leq \delta^\alpha\right\}\cap B_r(x_0)}|Du^{\frac{1}{\alpha}}|^{p}\,dx.
    \end{split}
\end{equation}
On the other hand, (formal) integration by parts gives
\begin{equation}\label{integration by parts}
\begin{split}
&\int\limits_{B_r(x_0)}|Du^{\frac{1}{\alpha}}|^{p-2}D(u^\frac{1}{\alpha})\cdot D(\varphi_\varepsilon(\omega))\,dx\\
&=\frac{1}{r}\int\limits_{\partial B_r(x_0)}\varphi_\varepsilon(\omega)\,|Du^{\frac{1}{\sigma}}|^{p-2} D(u^\frac{1}{\alpha})\cdot(x-x_0)\,dx-\int\limits_{B_r(x_0)}\varphi_\varepsilon(\omega)\,\Delta_p(u^{\frac{1}{\alpha}})\,dx.
\end{split}
\end{equation}
Since $\varphi_\varepsilon(\omega)\equiv 0$ in $\{u\le(\varepsilon/2)^\alpha\}$, we have
\begin{equation}\label{splitting the integral}
\int\limits_{B_r(x_0)}\varphi_\varepsilon(\omega)\,\Delta_p(u^{\frac{1}{\alpha}})\,dx=\int\limits_{B_r(x_0) \cap  \{ u> \left(\varepsilon/2\right)^\alpha\}}\varphi_\varepsilon(\omega)\,\Delta_p(u^{\frac{1}{\alpha}})\,dx.
\end{equation}
But, recalling \eqref{keira}, 
$$B_r(x_0) \cap  \left\{ u> \left(\varepsilon/2\right)^\alpha \right\} \subset \left\{ \left(\varepsilon/2\right)^\alpha < u \leq \delta_\star^{\frac{p}{p-\gamma}} \right\}$$
and \Cref{subsol} gives
$$
\Delta_p(u^{\frac{1}{\alpha}})\ge 0, \quad \mbox{in } B_r(x_0) \cap  \left\{ u> \left(\varepsilon/2\right)^\alpha \right\}.
$$
Observe also that, from \eqref{trieste2024} and \eqref{magicchoice}, in $B_r(x_0) \cap  \left\{ u> \left(\varepsilon/2\right)^\alpha \right\}$, we have
$$
\Delta_p(u^{\frac{1}{\alpha}})\le C u^{(\frac{1}{\alpha}-1)(p-1)-1+\gamma} = C u^{-\frac{1}{\alpha}} \le\frac{2C}{\varepsilon},
$$
for a constant $C>0$ depending only on $n$, $p$ and $\gamma$. Since
$$
0\le\varphi_\varepsilon(\omega)\le \omega\le\delta,
$$
from \eqref{splitting the integral}, we get
$$
0\le\int\limits_{B_r(x_0)}\varphi_\varepsilon(\omega)\,\Delta_p(u^{\frac{1}{\alpha}})\,dx\le\frac{C\delta}{\varepsilon},
$$
so the last term in \eqref{integration by parts} is finite and non-negative. Therefore,
\begin{equation*}
\begin{split}
&\int\limits_{B_r(x_0)}|Du^{\frac{1}{\alpha}}|^{p-2}D(u^\frac{1}{\alpha})\cdot D(\varphi_\varepsilon(\omega))\,dx\\
&\le\frac{1}{r}\int\limits_{\partial B_r(x_0)}\varphi_\varepsilon(\omega)\,|Du^{\frac{1}{\alpha}}|^{p-2}D(u^\frac{1}{\alpha})\cdot(x-x_0)\,dx.
\end{split}
\end{equation*} 
Thus, combining the last inequality with \eqref{equality}, we obtain 
\begin{equation}\label{integral estimate}
\begin{split}
    & \int\limits_{\left\{\varepsilon^\alpha\le u\le\delta^\alpha\right\}\cap B_r(x_0)}|Du^{\frac{1}{\alpha}}|^{p}\,dx \\
    & \le \delta \int\limits_{\partial B_r(x_0)}|Du^{\frac{1}{\alpha}}|^{p-1}\,dx-\int\limits_{\left\{u<\varepsilon^\alpha\right\}\cap B_r(x_0)}|Du^{\frac{1}{\alpha}}|^{p}\varphi_\varepsilon'(u^{\frac{1}{\alpha}})\,dx \\
    &\le\delta \int\limits_{\partial B_r(x_0)}|Du^{\frac{1}{\alpha}}|^{p-1}\,dx,
\end{split}
\end{equation}
where in the last estimate, we used the fact that $\varphi_\varepsilon'=\varphi'\ge0$. Finally, since
\begin{equation}\label{Gradient of the potential}
    Du^{\frac{1}{\alpha}}=\frac{1}{\alpha}u^{-\frac{\gamma}{p}}Du,
\end{equation}
it follows from \eqref{integral estimate} and \Cref{pointwisegradientestimate} that
\begin{equation*}
    \begin{split}
        \int\limits_{\left\{\varepsilon^\alpha\le u\le\delta^\alpha\right\}\cap B_r(x_0)}u^{-\gamma}|Du|^p\,dx&=\alpha^p\int\limits_{\left\{\varepsilon^\alpha\le u\le\delta^\alpha\right\}\cap B_r(x_0)}|Du^{\frac{1}{\alpha}}|^p\,dx\\        
        &\le \alpha^p \; \delta \int\limits_{\partial B_r(x_0)}|Du^{\frac{1}{\alpha}}|^{p-1}\,dx\\        
        &=\alpha \; \delta\int\limits_{\partial B_r(x_0)}\left[|Du|u^{-\frac{\gamma}{p}}\right]^{p-1}\,dx\\ 
        &=\alpha \; \delta \left( C\|u\|_{L^\infty(B_1)}^{p-\gamma} \right)^{1-\frac{1}{p}}   \left| \partial B_r(x_0) \right| \\
        &\le C \delta r^{n-1},
    \end{split}
\end{equation*}
where $C>0$ is a constant depending only on $n$, $p$, $\gamma$ and $\|u\|_{L^\infty (B_1)}$. Letting $\varepsilon\to0$, we conclude the proof. 
\end{proof}

\section{Hausdorff measure estimates}\label{Hauss sec}
From \Cref{porosity}, we know that the $n-$dimensional Lebesgue measure of the free boundary is zero. In this section, our goal is to show that, in fact, the $(n-1)-$dimensional Hausdorff measure of the free boundary is finite. While the case $p=2$ is addressed in \cite{P2}, the nonlinear scenario presents substantially greater complexity. Here, we refine the approach developed in \cite{AT13}, which diverges significantly from the methodology in \cite{P2}. The core idea is to derive uniform estimates for minimizers in a flatness regime, leveraging these results alongside \Cref{pointwisegradientestimate}. We combine the estimate in the flatness regime obtained in \Cref{integral estimate in strip} with a measure estimate in the same regime (\Cref{Lebesgue measure estimate lemma}) and a measure estimate in a strip (\Cref{density lemma}) to control stripped neighborhoods of the free boundary.

Recall that the $s-$dimensional Hausdorff measure of a set $E$ is defined by 
$$
\mathcal{H}^s(E):=\lim_{\sigma\to0}\mathcal{H}_\sigma^s(E),
$$
where 
$$
\mathcal{H}_\sigma^s(E):=\inf \left\{ \sum_{j=1}^\infty \mu(s) \left( \frac{\diam \, E_j}{2} \right)^s \right\}.
$$
Here, the infimum is taken over all countable coverings $\{E_j\}$ of $E$ such that $\diam \, E_j \le\sigma$, $\mu(s):=\frac{\pi^{s/2}}{\Gamma \left( \frac{s}{2}+1\right)}$ and $\Gamma(s):=\int_0^\infty e^{-t}t^{s-1}\,dt$, for $s>0$, is the usual Gamma function.

\smallskip

The following result provides a measure estimate in the flatness regime.

\begin{lemma}\label{Lebesgue measure estimate lemma}
    If $u$ is a local minimizer of \eqref{1.1} in $B_1$, and
    $$
    0<\delta \leq r \leq \min\{r_\star,\delta_\star\},
    $$  
    where $\delta_\star>0$ and $r_\star>0$ are defined by \eqref{definition of delta star} and \eqref{definition of r star} respectively, then there exists $C'>0$, depending only on $n$, $p$, $\gamma$ and $\|u\|_{L^\infty (B_1)}$, such that for \( x_0 \in \partial\{ u > 0 \} \cap B_{1/4} \), one has  
\[
\left| \left\{ 0 < u < \delta^{\frac{p}{p-\gamma}} \right\} \cap B_r(x_0) \right| \leq C' \delta r^{n-1}.  
\]  
\end{lemma}

\begin{proof}
Let $x_0\in\partial\{u>0\}\cap B_{1/4}$ and $\{B_j\}$ be a family of balls covering $\partial\{u>0\}\cap B_r(x_0)$, with center at $x_j\in\partial\{u>0\}\cap B_r(x_0)$ and radius $C_0\delta$, for a constant $C_0>1$ to be chosen later. Besicovitch lemma ensures that we can choose such a covering with finite overlaps, \textit{i.e.}, there exists a universal constant $m$ such that
\begin{equation}\label{finite overlap}
        \sum_j \chi_{_{B_j}}\le m,
\end{equation}
where $\chi_{_{B_j}}$ is the characteristic function of the set $B_j$. Note that we can make sure that
$$    \bigcup_jB_j\subset\left[B_{\frac{9}{16}}\cap B_{4r}(x_0)\right].
$$
Moving forward, we divide the proof into three steps.

\medskip
\noindent \textit{Step 1}. We claim that, for each integer $j>0$, there are two balls $B^1_j$, $B^2_j$, with radii $r_j^1$ and $r_j^2$, of order $\delta$, such that
$$
B^1_j \cup B^2_j \subset B_j \qquad \mbox{and} \qquad B^1_j \cap B^2_j = \emptyset.
$$
Moreover,
\begin{equation}\label{choice of phi}  
\omega\ge\delta\left(\frac{3}{4}\right)^{\frac{1}{\alpha}}\ \textrm{in}\ B^1_j \qquad \textrm{and} \qquad \omega\le \delta\left(\frac{2}{3}\right)^{\frac{1}{\alpha}}\ \textrm{in}\ B^2_j,
\end{equation}
where $\omega$, previously defined by \eqref{definition of omega}, is given by
\begin{equation*}
\omega=\left\{
\begin{array}{ccc}
    u^{\frac{1}{\alpha}} & \mbox{in} & \{u < \delta^\alpha\}, \\
     \delta & \mbox{in} & \{u \geq \delta^\alpha\}, 
\end{array}
\right.
\end{equation*}
and $\alpha=\frac{p}{p-\gamma}$, as defined by \eqref{definition of alpha}. Indeed, let $z\in \partial(\frac{1}{4}B_j)$ be such that
$$
u(z)=\sup_{\partial(\frac{1}{4}B_j)}u.
$$
Let $C>0$ be the constant from \Cref{Harnack type inequality}. Using \Cref{non-degeneracy}, 
\begin{equation*}
        u(z)\ge c\left(\frac{C_0\delta}{4}\right)^\alpha\ge C\delta^\alpha,
\end{equation*}
where $C_0$ is chosen large enough, depending only on $n$, $p$, $\gamma$ and $\|u\|_{L^\infty(B_1)}$. Next, for $x\in B_j$, we apply \Cref{Harnack type inequality} in $B_{|x-z|} (x)$, obtaining
$$
u(z) \leq \sup_{B_{|x-z|} (x)} u \leq C\left(|x-z|^\alpha+u(x)\right),
$$
and thus,
$$
u(x)\ge\frac{1}{C}u(z)-|x-z|^\alpha\ge\delta^\alpha-|x-z|^\alpha, \quad x\in B_j.
$$
Taking 
$$
|x-z|\le\frac{\delta}{4^{1/\alpha}},
$$
we obtain
$$
u(x)\ge\frac{3}{4}\delta^\alpha\,\,\,\textrm{ in }\,\,\,B_j^1:=B_{r_j^1}(z),
$$
where $r_j^1:=\delta/4^{1/\alpha}$. It remains to choose $C_0>1$ large enough so that $B_j^1\subset B_j$, which holds if 
$$
|x-z|\leq \frac{\delta}{4^{1/\alpha}} \ \Longrightarrow \ |x-x_j| \le C_0 \delta.
$$
By the triangular inequality, 
$$
|x-x_j| \leq |x-z| + |z-x_j| \leq \frac{\delta}{4^{1/\alpha}}  + \frac{C_0 \delta}{4},
$$
and it is enough to choose
$$
\frac{\delta}{4^{1/\alpha}}  + \frac{C_0 \delta}{4} \leq C_0 \delta \ \Longleftarrow \ C_0 \geq \max \left\{1, \frac{4^{\gamma/p}}{3} \right\}.
$$
Note that this choice is universal.

On the other hand, recalling that $x_j \in \partial\{u>0\}$ and using \Cref{Harnack type inequality} for $B_j^2:= B_{r_j^2}(x_j)$, where $r_j^2:=\left(\frac{2}{3C}\right)^{\frac{1}{\alpha}} \delta<C_0\delta$, one obtains 
$$
\sup_{B_j^2} u \leq C \left( \left(\frac{2}{3C}\right)^{\frac{1}{\alpha}} \delta \right)^\alpha = \frac{2}{3}\delta^\alpha
$$
and the second inequality in \eqref{choice of phi} follows.

\smallskip

\noindent
\textit{Step 2}. Next, consider 
$$
m_1:=\left(\dfrac{3}{4}\right)^{\frac{1}{\alpha}}, \quad m_2:=\left(\dfrac{2}{3}\right)^{\frac{1}{\alpha}}, \quad \kappa :=\dfrac{m_1-m_2}{2}, \quad \mbox{and} \quad m:=\dfrac{m_1+m_2}{2}.
$$
From \eqref{choice of phi}, the following lower bound holds
\begin{equation}\label{beforepoincare}
\left|\omega-(\omega)_j \right| \geq \kappa \delta,
\end{equation}
in at least one of the two balls $B_j^1$ and $B_j^2$, where
$$    
(\omega)_j:=\fint_{B_j}\omega\,dx.
$$
In fact, assuming that $(\omega)_j \leq m \delta$, we have
$$
\left|\omega-(\omega)_j \right| \geq \omega-(\omega)_j \geq m_1\delta-m\delta = \kappa\delta, \quad \mbox{in } \; B^1_j. 
$$
In case $(\omega)_j>m\delta$, we still obtain
$$
\left|\omega-(\omega)_j \right| \geq (\omega)_j-\omega > m\delta -m_2\delta = \kappa\delta, \quad \mbox{in } \; B^2_j. 
$$

\smallskip

\noindent
\textit{Step 3}. Using \eqref{beforepoincare}, Poincar\'e inequality in balls then implies
\begin{equation*}
        C_1 \kappa^p\delta^p \le \frac{1}{|B_j|} \int_{B_j} \left| \omega-(\omega)_j \right|^p\,dx\le C_2\frac{\delta^p}{|B_j|}\int_{B_j}|D\omega|^p\,dx,
\end{equation*}
for constants $C_1, C_2 >0$ universal. Recalling the definition of $\omega$ and \eqref{Gradient of the potential}, we obtain
\begin{equation}\label{important estimate}
        \int\limits_{\left\{0<u<\delta^\alpha\right\} \cap B_j} u^{-\gamma}|Du|^p\,dx\ge C_3|B_j|,
\end{equation}
for a universal constant $C_3>0$.

Furthermore, by \Cref{sharp non-degeneracy}, in $\{u>0\}$ one has
$$
u(x) \ge c_0 \left[d(x)\right]^\alpha,
$$
where $d(x)$ is the distance of $x$ to the free boundary, defined by \eqref{definition of d}, and $c_0>0$ is a universal constant. For $x\in\{0<u<\delta^\alpha\}$ we then have
$$
\left[d(x)\right]^\alpha \leq \frac{u(x)}{c_0} < \frac{\delta^\alpha}{c_0}.
$$
Thus,
$$
d(x) < (1/c_0)^{1/\alpha}\delta =: C_*\delta.
$$
Therefore,
$$
\left\{0<u<\delta^\alpha\right\}\cap B_r(x_0)\subset\mathcal{N}_{C_*\delta}\left( \partial\{u>0\} \right) \cap B_{r}(x_0), 
$$
where
\begin{equation}\label{definition of N}
        \mathcal{N}_\epsilon(E):= \left\{ x\in\R^n : \, \dist(x,E)<\epsilon \right\}.
\end{equation}
Hence, for sufficiently small universal constants $\delta<r$, we have
$$
\left\{0<u<\delta^\alpha\right\}\cap B_r(x_0)\subset\bigcup_j C_* B_j \subset B_{4r}(x_0).
$$

Finally, using \Cref{integral estimate in strip}, \eqref{finite overlap} and \eqref{important estimate}, we estimate
$$
\begin{array}{ccl}
\left| \left\{0<u<\delta^\alpha\right\}\cap B_r(x_0) \right| & \leq & C_* \displaystyle \sum_j |B_j| \\
& \leq & \dfrac{2}{C_3}\displaystyle \sum_j\int\limits_{\left\{0<u<\delta^\alpha\right\}\cap B_j}u^{-\gamma}|Du|^p\,dx\\
& \leq & \dfrac{C_4}{C_3} \delta \displaystyle \sum_j r_j^{n-1} \\
& \leq & \frac{m {C_0}^{n-1}C_4}{C_3} \delta r^{n-1}\\
& = & C^\prime \delta r^{n-1}, 
\end{array}    
$$
for some $C^\prime>0$ universal.
\end{proof}

The following lemma is from \cite[Lemma 4.4]{RT11}. 

\begin{lemma}\label{density lemma}
Let $E \subset \R^n$. If there exists $\tau>0$ such that 
\begin{equation}\label{density esimate}
        \frac{| E \cap B_\delta(x) |}{|B_\delta(x)|}\ge\tau,
\end{equation}
for all $x\in\partial E\cap\Omega$ and some $\delta\in(0,1)$, then
there exists a constant $\tilde{C}>0$, depending only on $n$ and $\tau$, such that
$$
\left| \mathcal{N}_\delta(\partial E)\cap B_r(x) \right| \le \frac{1}{2^n\tau} \left| \mathcal{N}_\delta(\partial E)\cap B_r(x)\cap E \right| + \tilde{C}\delta r^{n-1},
$$
where $r>\delta$, and $\mathcal{N}_\delta$ is defined by \eqref{definition of N}. Furthermore, if \eqref{density esimate} holds for any $\delta\in(0,1)$, then
$$
\left| \partial E\cap\Omega \right| = 0.
$$
\end{lemma}

Before the main result of this section, we combine the previous two lemmas to obtain the following proposition.

\begin{proposition}\label{Lebesgue measure estimate}
Let $\Omega'\Subset \Omega$. If $u$ is a minimizer of \eqref{1.1} and $x_0\in\partial\{u>0\}\cap\Omega'$, then there exists $C>0$ and $r>0$, depending only on $n$, $p$ and $\gamma$, such that, for any $0<\delta \ll r$, there holds
$$    
\left| \mathcal{N}_\delta\left(\partial\{u>0\}\right)\cap B_r(x_0) \right| \leq C\delta r^{n-1},
$$
where $\mathcal{N}_\delta$ is defined by \eqref{definition of N}. 
\end{proposition} 

\begin{proof}
Using \Cref{positivedensity}, we obtain $\tau \in(0,1)$, depending only on $\Omega^\prime$, $n$, $p$ and $\gamma$, such that, for any small ball $B_{\delta}(x_0) \subset \Omega^\prime$, centered at $x_0\in\partial\{u>0\}$, one has
$$
\frac{|\{u>0\}\cap B_{\delta}(x_0)|}{|B_{\delta}(x_0)|}\ge\tau.
$$
Now, from \Cref{density lemma}, with $E=\{u>0\}$, we find $r >\delta$, such that
\begin{equation}\label{Lebesgue measure estimate eq}
\begin{split}
        |\mathcal{N}_\delta(\partial\{u>0\})\cap B_r(x_0)|&\le C_*|\mathcal{N}_\delta(\partial\{u>0\})\cap B_r(x_0)\cap\{u>0\}|\\
        &\quad+\tilde{C}\delta r^{n-1},
\end{split}
\end{equation}
where $C_*>0$ and $\tilde{C}>0$ are universal constants. Observe that if $y\in\mathcal{N}_\delta(\partial\{u>0\})\cap B_r(x_0)\cap\{u>0\}$, then there exists $z\in\partial\{u>0\}$ such that $|y-z|\le\delta$. Employing \Cref{Harnack type inequality}, we get
$$
u(y)\le C\left(|y-z|^\alpha+u(z)\right)\le C\delta^\alpha.
$$
Thus,
\begin{equation}\label{subset}
        \mathcal{N}_\delta(\partial\{u>0\})\cap B_r(x_0)\cap\{u>0\}\subset\{0<u<C\delta^\alpha\}\cap B_r(x_0).
\end{equation}
By \Cref{Lebesgue measure estimate lemma}, we have
\begin{equation}\label{lemma application}
        |\{0<u<C\delta^\alpha\}\cap B_r(x_0)|\le C' {C}^{1/\alpha}\delta r^{n-1}.
\end{equation}
Combining \eqref{Lebesgue measure estimate eq}-\eqref{lemma application}, we get the desired result.    
\end{proof}

We are now ready to prove our main result.

\begin{theorem}\label{Hausdorff}
    Let $\Omega'\Subset \Omega$. If $u$ is a minimizer of \eqref{1.1}, and $x_0\in\partial\{u>0\}\cap\Omega'$, then there exists a constant $C>0$, depending only on $\Omega'$, $n$, $p$ and $\gamma$, such that
    $$
    \mathcal{H}^{n-1}(\partial\{u>0\}\cap B_r(x_0))< Cr^{n-1},
    $$
    for any $r>0$ universally small. 
\end{theorem}

\begin{proof}
Recalling Besicovitch's covering lemma, let $\{B_\sigma(x_i)\}_{i\in I}$ be a finite covering of $\partial\{u>0\}\cap B_r(x_0)$, with $x_i\in\partial\{u>0\}\cap B_r(x_0)$ and $\sigma<r$. Observe that
\begin{equation*}
    \bigcup_i B_\sigma(x_i)\subset\mathcal{N}_\sigma\left(\partial\{u>0\}\right)\cap B_{r+\sigma}(x_0),
\end{equation*}
which together with \Cref{Lebesgue measure estimate} leads to
\begin{eqnarray*}
    \mathcal{H}_\sigma^{n-1}(\partial\{u>0\}\cap B_r(x_0)) & \le & C^\prime\sum_{i\in I}|\partial B_{\sigma}(x_i)|\\
    & = & \frac{C^\prime  n}{\sigma}\sum_{i\in I}|B_{\sigma}(x_i)|\\
    &\le &\frac{C^\prime n}{\sigma} \left| \mathcal{N}_\sigma\left(\partial\{u>0\}\right)\cap B_{r+\sigma}(x_0) \right|  \\
    &\le & C^\prime n (r+\sigma)^{n-1}\\
    &\le& Cr^{n-1},
\end{eqnarray*}
where $C>0$ is a universal constant.
Letting $\sigma\to0$ in the last inequality, we arrive at
$$
\mathcal{H}^{n-1}(\partial\{u>0\}\cap B_r(x_0))\le Cr^{n-1}.
$$
\end{proof}    

\begin{remark}
Since the free boundary has locally finite $\mathcal{H}^{n-1}-$measure, the set $\{u>0\}$ has locally finite perimeter in $\Omega$. Thus, $D(\chi_{\{u>0\}})$ is, in the sense of distributions, a vector-valued Borel measure supported on the free boundary, and its total variation is a Radon measure (see \cite{EG15}). Moreover, up to a negligible set of the null perimeter, the free boundary is a union of, at most, a countable family of $C^1-$hypersurfaces (see \cite{G84}).
\end{remark}

\bigskip

{\small \noindent{\bf Acknowledgments.}} {\footnotesize This publication is based upon work supported by King Abdullah University of Science and Technology (KAUST) under Award No. ORFS-CRG12-2024-6430. DJA is partially supported by the Conselho Nacional de Desenvolvimento Cient\'\i fico e Tecnol\'ogico (CNPq) grants 310020/2022-0 and 420014/2023-3.}

\bigskip

\end{document}